\title[]{Algorithms and formulas for conversion between system signatures and reliability functions}
\author{Jean-Luc Marichal}
\address{Mathematics Research Unit, FSTC, University of Luxembourg, 6, rue Coudenhove-Kalergi, L-1359 Luxembourg, Luxembourg}
\email{jean-luc.marichal[at]uni.lu}
\date{April 30, 2014}
\begin{document}

\theoremstyle{plain}
\newtheorem{theorem}{Theorem}
\newtheorem{lemma}[theorem]{Lemma}
\newtheorem{proposition}[theorem]{Proposition}
\newtheorem{corollary}[theorem]{Corollary}
\newtheorem{fact}[theorem]{Fact}
\newtheorem*{main}{Main Theorem}

\theoremstyle{definition}
\newtheorem{definition}[theorem]{Definition}
\newtheorem{example}{Example}
\newtheorem{algorithm}{Algorithm}

\theoremstyle{remark}
\newtheorem*{conjecture}{onjecture}
\newtheorem{remark}{Remark}
\newtheorem{claim}{Claim}

\newcommand{\N}{\mathbb{N}}
\newcommand{\R}{\mathbb{R}}
\newcommand{\Q}{\mathbb{Q}}
\newcommand{\Vspace}{\vspace{2ex}}
\newcommand{\bfx}{\mathbf{x}}
\newcommand{\bfy}{\mathbf{y}}
\newcommand{\bfz}{\mathbf{z}}
\newcommand{\bfh}{\mathbf{h}}
\newcommand{\bfe}{\mathbf{e}}
\newcommand{\bfs}{\mathbf{s}}
\newcommand{\bfS}{\overline{\mathbf{S}}}
\newcommand{\bfd}{\mathbf{d}}
\newcommand{\os}{\mathrm{os}}
\newcommand{\dd}{\,\mathrm{d}}

\begin{abstract}
The concept of signature is a useful tool in the analysis of semicoherent systems with continuous and i.i.d.\ component lifetimes, especially for the comparison of different system designs and the computation of the system reliability. For such systems, we provide conversion formulas between the signature and the reliability function through the corresponding vector of dominations and we derive efficient algorithms for the computation of any of these concepts from the other. We also show how the signature can be easily computed from the reliability function via basic manipulations such as differentiation, coefficient extraction, and integration.
\end{abstract}

\keywords{Semicoherent system, system signature, reliability function, domination vector.}

\subjclass[2010]{62N05, 90B25, 94C10}

\maketitle

%---------------------------------------------------------------------------------------------- Section 1
\section{Introduction}

Consider an $n$-component system $(C,\phi)$, where $C$ is the set $[n]=\{1,\ldots,n\}$ of its components and $\phi\colon\{0,1\}^n\to\{0,1\}$ is its structure function which expresses the state of the system in terms of the states of its components. We assume that the system is \emph{semicoherent}, which means that the structure function $\phi$ is nondecreasing in each variable and satisfies the conditions $\phi(0,\ldots,0)=0$ and $\phi(1,\ldots,1)=1$. We also assume that the components have continuous and i.i.d.\ lifetimes $T_1,\ldots,T_n$.

Samaniego \cite{Sam85} introduced the \emph{signature} of such a system as the $n$-vector $\mathbf{s}=(s_1,\ldots,s_n)$ whose $k$-th coordinate $s_k$ is the probability that the $k$-th component failure causes the system to fail. That is,
$$
s_k ~=~ \Pr(T_S=T_{k:n}),\qquad k=1,\ldots,n,
$$
where $T_S$ denotes the system lifetime and $T_{k:n}$ denotes the $k$-th smallest lifetime. From this definition one can immediately derive the identity $\sum_{k=1}^ns_k=1$.

It is very often convenient to express the signature vector $\bfs$ in terms of the {\em tail signature} of the system, a concept introduced by Boland~\cite{Bol01} and named so by Gertsbakh et al.~\cite{GerShpSpi11}. The tail signature of the system is the $(n+1)$-vector $\bfS=(\overline{S}_0,\ldots,\overline{S}_n)$ defined from $\bfs$ by
\begin{equation}\label{eq:SFromS}
\overline{S}_k ~=~\sum_{i=k+1}^ns_i{\,},\qquad k=0,\ldots,n.
\end{equation}
In particular, we have $\overline{S}_0=1$ and $\overline{S}_n=0$. Moreover, it is clear that the signature $\bfs$ can be retrieved from the tail signature $\bfS$ through the formula
\begin{equation}\label{eq:sFromS}
s_k ~=~ \overline{S}_{k-1}-\overline{S}_k{\,},\qquad k=1,\ldots,n.
\end{equation}

Recall also that the \emph{reliability function} associated with the structure function $\phi$ is the unique multilinear polynomial function $h\colon [0,1]^n\to\R$ whose restriction to $\{0,1\}^n$ is precisely the structure function $\phi$. Since the component lifetimes are independent, this function expresses the reliability of the system in terms of the component reliabilities (for general background see \cite[Chap.~2]{BarPro81} and for a more recent reference see \cite[Section 3.2]{Ram90}).

By identifying the variables of the reliability function, we obtain a real polynomial function $h(x)$ of degree at most $n$. The $n$-vector $\bfd=(d_1,\ldots,d_n)$ whose $k$-th coordinate $d_k$ is the coefficient of $x^k$ in $h(x)$ is called the \emph{vector of domination} of the system (see, e.g., \cite[Sect.~6.2]{Sam07}).

The computation of the signature of a large system by means of the usual methods may be cumbersome and tedious since it requires the evaluation of the structure function $\phi$ at every element of $\{0,1\}^n$. However, Boland et al.~\cite{BolSamVes03} observed that the $n$-vectors $\bfs$ and $\bfd$ can always be computed from each other through simple bijective linear transformations (see also \cite[Sect.~6.3]{Sam07}). Although these linear transformations were not given explicitly, they show that the signature vector $\bfs$ can be efficiently computed from the domination vector $\bfd$, or equivalently, from the polynomial function $h(x)$. Since Eqs.~(\ref{eq:SFromS}) and (\ref{eq:sFromS}) provide linear conversion formulas between vectors $\bfs$ and $\bfS$, we observe that any of the vectors $\bfs$, $\bfS$, and $\bfd$ can be computed from any other by means of a bijective linear transformation (see Figure~\ref{fig:blt}).

\setlength{\unitlength}{5ex}% 0.7cm 4.5ex
\begin{figure}[htbp]\centering
\begin{picture}(4.5,2.5)
\put(0.25,0.25){\makebox(0,0)[t]{$\bfs$}}\put(4.25,0.25){\makebox(0,0){$\bfS$}}\put(2.25,2.25){\makebox(0,0){$\bfd$ or $h(x)$}}
\put(0.50,0.75){\vector(1,1){1}}\put(1.75,1.50){\vector(-1,-1){1}}
\put(3.75,0.50){\vector(-1,1){1}}\put(3,1.75){\vector(1,-1){1}}
\put(1.25,0.30){\vector(1,0){2}}\put(3.25,-0.03){\vector(-1,0){2}}
\end{picture}
\caption{Bijective linear transformations} \label{fig:blt}
\end{figure}
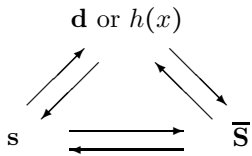

After recalling some basic formulas in Section 2 of this paper, in Section 3 we yield these linear transformations explicitly and present them as linear conversion formulas. From these conversion formulas we derive algorithms for the computation of any of these vectors from any other. These algorithms prove to be very efficient since they require at most $\frac{1}{2}n(n+1)$ additions and multiplications.

We also show how the computation of the vectors $\bfs$ and $\bfS$ can be easily performed from basic manipulations of function $h(x)$ such as differentiation, reflection, coefficient extraction, and integration. For instance, we establish the polynomial identity (see Eq.~(\ref{eq:sfrrd6543}))
\begin{equation}\label{eq:exsg3}
\sum_{k=1}^n{n\choose k}{\,}s_k{\,}x^k ~=~ \int_0^x(R^{n-1}h')(t+1)\, dt{\,},
\end{equation}
where $h'(x)$ is the derivative of $h(x)$ and $(R^{n-1}h')(x)$ is the polynomial function obtained from $h'(x)$ by switching the coefficients of $x^k$ and $x^{n-1-k}$ for $k=0,\ldots,n-1$. Applying this result to the classical $5$-component bridge system (see Example~\ref{ex:21j4hg} below), we can easily see that Eq.~(\ref{eq:exsg3}) reduces to
$$
5s_1{\,}x + 10s_2{\,}x^2 + 10s_3{\,}x^3 + 5s_4{\,}x^4 + s_5{\,}x^5 ~=~ 2x^2+6x^3+x^4{\,}.
$$
By equating the corresponding coefficients we immediately obtain the signature vector $\bfs=(0,\frac{1}{5},\frac{3}{5},\frac{1}{5},0)$.

In Section 4 we examine the general non-i.i.d.\ setting where the component lifetimes $T_1,\ldots,T_n$ may be dependent. We show how a certain modification of the structure function enables us to formally extend almost all the conversion formulas and algorithms obtained in Sections 2 and 3 to the general dependent setting.
%We also observe that, even though this formal extension requires heavier computations due to the need of considering the component lifetimes, it turns out to have some theoretical interest.
Finally, we end our paper in Section 5 by some concluding remarks.

%---------------------------------------------------------------------------------------------- Section 2
\section{Preliminaries}

Boland \cite{Bol01} showed that every coordinate $s_k$ of the signature vector can be explicitly written in the form
\begin{equation}\label{eq:asad678}
s_k ~=~ \sum_{\textstyle{A\subseteq C\atop |A|=n-k+1}}\frac{1}{{n\choose |A|}}\,\phi(A)-\sum_{\textstyle{A\subseteq C\atop |A|=n-k}}\frac{1}{{n\choose |A|}}\,\phi(A)\, .
\end{equation}
Here and throughout we identify Boolean $n$-vectors $\bfx\in\{0,1\}^n$ and subsets $A\subseteq [n]$ in the usual way, that is, by setting $x_i=1$ if and only if $i\in A$. Thus we use the same symbol to denote both a function $f\colon\{0,1\}^n\to\R$ and the corresponding set function $f\colon 2^{[n]}\to\R$ interchangeably. For instance, we write $\phi(0,\ldots,0)=\phi(\varnothing)$ and $\phi(1,\ldots,1)=\phi(C)$.

As mentioned in the introduction, the reliability function associated with the structure function $\phi$ is the multilinear function $h\colon [0,1]^n\to\R$ defined by
\begin{equation}\label{eq:345hg}
h(\bfx) ~=~ h(x_1,\ldots,x_n) ~=~ \sum_{A\subseteq C}\phi(A)\, \prod_{i\in A}x_i\,\prod_{i\in C\setminus A}(1-x_i).
\end{equation}
It is easy to see that this function can always be put in the unique standard multilinear form
\begin{equation}\label{eq:345hg2}
h(\bfx) ~=~ \sum_{A\subseteq C}d(A)\,\prod_{i\in A}x_i{\,},
\end{equation}
where, for every $A\subseteq C$, the coefficient $d(A)$ is an integer.

By identifying the variables $x_1,\ldots,x_n$ in function $h(\bfx)$, we define its diagonal section $h(x,\ldots,x)$, which we have simply denoted by $h(x)$. From Eqs.~(\ref{eq:345hg}) and
(\ref{eq:345hg2}) we immediately obtain
$$
h(x) ~=~ \sum_{A\subseteq C}\phi(A)\, x^{|A|} (1-x)^{n-|A|} ~=~ \sum_{A\subseteq C}d(A)\, x^{|A|}\, ,
$$
or equivalently,
\begin{equation}\label{eq:swrdf675sh}
h(x) ~=~ \sum_{k=0}^n\phi_k\, x^{k}(1-x)^{n-k} ~=~ \sum_{k=0}^n d_k\, x^{k}\, ,
\end{equation}
where
\begin{equation}\label{eq:sdf675sh}
\phi_k ~=~ \sum_{\textstyle{A\subseteq C\atop |A|=k}}\phi(A)\qquad\mbox{and}\qquad d_k ~=~ \sum_{\textstyle{A\subseteq C\atop |A|=k}}d(A){\,},\qquad k=0,\ldots,n.
\end{equation}

Clearly, we have $\phi_0=\phi(\varnothing)=0$ and $d_0=d(\varnothing)=h(0)=0$. As already mentioned, the $n$-vector $\bfd=(d_1,\ldots,d_n)$ is called the \emph{vector of dominations} of the system.

\begin{example}\label{ex:21j4hg}
Consider the bridge structure as indicated in Figure~\ref{fig:bs}. The corresponding structure function is given by
$$
\phi(x_1,\ldots,x_5) ~=~ x_1\, x_4\amalg x_2\, x_5\amalg x_1\, x_3\, x_5\amalg x_2\, x_3\, x_4{\,},
$$
where $\amalg$ is the (associative) coproduct operation defined by $x\amalg y = 1-(1-x)(1-y)$. The corresponding reliability function, given in Eq.~(\ref{eq:345hg}), can be computed by expanding the coproducts in $\phi$ and then simplifying the resulting algebraic expression using $x_i^2=x_i$. We have
\begin{eqnarray*}
h(x_1,\ldots,x_5) &=& x_1 x_4 + x_2 x_5 + x_1 x_3 x_5 + x_2 x_3 x_4 \\
&& \null - x_1 x_2 x_3 x_4 - x_1 x_2 x_3 x_5 - x_1 x_2 x_4 x_5  - x_1 x_3 x_4 x_5 - x_2 x_3 x_4 x_5\\
&& \null + 2\, x_1 x_2 x_3 x_4 x_5\, .
\end{eqnarray*}
We then obtain its diagonal section $h(x) = 2x^2+2x^3-5x^4+2x^5$ and finally the domination vector $\bfd=(0,2,2,-5,2)$.
\end{example}

\setlength{\unitlength}{4ex}% 0.7cm 4.5ex
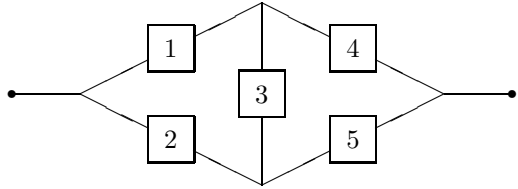
\begin{figure}[htbp]\centering
\begin{picture}(11,4)
\put(3,0.5){\framebox(1,1){$2$}} \put(3,2.5){\framebox(1,1){$1$}} \put(5,1.5){\framebox(1,1){$3$}} \put(7,0.5){\framebox(1,1){$5$}}
\put(7,2.5){\framebox(1,1){$4$}}%
\put(0,2){\line(1,0){1.5}}\put(1.5,2){\line(2,-1){1.5}}\put(5.5,0){\line(-2,1){1.5}}\put(1.5,2){\line(2,1){1.5}}\put(5.5,4){\line(-2,-1){1.5}}%
\put(0,2){\circle*{0.15}}%
\put(9.5,2){\line(1,0){1.5}}\put(5.5,0){\line(2,1){1.5}}\put(9.5,2){\line(-2,-1){1.5}}\put(5.5,4){\line(2,-1){1.5}}\put(9.5,2){\line(-2,1){1.5}}%
\put(11,2){\circle*{0.15}}%
\put(5.5,0){\line(0,1){1.5}}\put(5.5,4){\line(0,-1){1.5}}
\end{picture}
\caption{Bridge structure} \label{fig:bs}
\end{figure}

Example~\ref{ex:21j4hg} illustrates the important fact that the reliability function $h(\bfx)$ of any system can be easily obtained from the minimal path sets simply by first expressing the structure function as a coproduct over the minimal path sets and then expanding the coproduct and simplifying the resulting algebraic expression (using $x_i^2=x_i$) until it becomes multilinear. The diagonal section $h(x)$ of the reliability function is then obtained by identifying all the variables.

This observation is crucial since, when combined with an efficient algorithm for converting the polynomial function $h(x)$ into the signature vector, it provides an efficient way to compute the signature of any system from its minimal path sets.

%---------------------------------------------------------------------------------------------- Section 3
\section{Conversion formulas}

Recall that Eq.~(\ref{eq:345hg2}) gives the standard multilinear form of the reliability function $h(\bfx)$. As mentioned for instance in \cite[p.~31]{Ram90}, the link between the coefficients $d(A)$ and the values $\phi(A)$ is given through the following linear conversion formulas (obtained from the M\"obius inversion theorem)
\begin{equation}\label{eq:Mobius}
\phi(A) ~=~ \sum_{B\subseteq A}d(B)\qquad\mbox{and}\qquad d(A) ~=~ \sum_{B\subseteq A}(-1)^{|A|-|B|}\, \phi(B)\, .
\end{equation}

The following proposition yields the linear conversion formulas between the $n$-vectors $\bfd=(d_1,\ldots,d_n)$ and $(\phi_1,\ldots,\phi_n)$. Note that an alternative form of Eq.~(\ref{eq:Mobiusc1}) was previously found by Samaniego \cite[Sect.~6.3]{Sam07}.

\begin{proposition}
We have
\begin{equation}\label{eq:Mobiusc}
\phi_k ~=~ \sum_{j=0}^k{n-j\choose k-j}{\,}d_j{\,},\qquad k=1,\ldots,n,
\end{equation}
and
\begin{equation}\label{eq:Mobiusc1}
d_k ~=~ \sum_{j=0}^k(-1)^{k-j}{\,}{n-j\choose k-j}{\,}\phi_j{\,},\qquad k=1,\ldots,n.
\end{equation}
\end{proposition}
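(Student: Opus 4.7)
My plan is to derive both identities directly from the Möbius inversion formulas already stated in Eq.~(\ref{eq:Mobius}), by summing over subsets of a fixed cardinality and then reindexing the double sum so that the coefficient of each $d_j$ (resp.\ $\phi_j$) acquires a clean binomial form.

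For the first identity, I would start from the definition $\phi_k=\sum_{|A|=k}\phi(A)$ in Eq.~(\ref{eq:sdf675sh}) and substitute the left-hand part of Eq.~(\ref{eq:Mobius}), obtaining a double sum $\sum_{|A|=k}\sum_{B\subseteq A}d(B)$. Swapping the order of summation, I group the contribution of each subset $B$ and count how many $A\supseteq B$ with $|A|=k$ exist: the answer is $\binom{n-|B|}{k-|B|}$. Partitioning by $j=|B|$ and invoking the definition $d_j=\sum_{|B|=j}d(B)$ yields precisely Eq.~(\ref{eq:Mobiusc}). The second identity~(\ref{eq:Mobiusc1}) is proved analogously starting from $d_k=\sum_{|A|=k}d(A)$ and using the right-hand part of Eq.~(\ref{eq:Mobius}); the only extra bookkeeping is that the sign $(-1)^{|A|-|B|}=(-1)^{k-j}$ depends on $j$ but not on the particular $B$ with $|B|=j$, so it factors cleanly out of the inner sum.

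As a sanity check, and an alternative route worth mentioning, I would observe that the two identities are equivalent to expressing the equality of the two representations of $h(x)$ in Eq.~(\ref{eq:swrdf675sh}) in the two natural bases $\{x^k\}$ and $\{x^k(1-x)^{n-k}\}$ of $\R_n[x]$. Writing $x^k=x^k\bigl((1-x)+x\bigr)^{n-k}$ and expanding by the binomial theorem, then substituting into $\sum_k d_k x^k$ and collecting the coefficient of $x^\ell(1-x)^{n-\ell}$, produces Eq.~(\ref{eq:Mobiusc}); expanding $(1-x)^{n-k}$ by the binomial theorem and collecting the coefficient of $x^\ell$ produces Eq.~(\ref{eq:Mobiusc1}). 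Either route works and the two are logically independent since $\{x^k(1-x)^{n-k}\}_{k=0}^n$ is a basis.

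The only mild obstacle is purely bookkeeping: making sure the index swap is handled correctly so that the summation range $j\in\{0,\ldots,k\}$ emerges (the terms with $j>k$ vanish automatically since $\binom{n-j}{k-j}=0$ when $k<j\le n$), and that the $j=0$ term is harmless in view of $d_0=\phi_0=0$. Neither identity requires any new input beyond Eq.~(\ref{eq:Mobius}) and the elementary identity $|\{A:|A|=k,\ B\subseteq A\}|=\binom{n-|B|}{k-|B|}$, so the proof is essentially two short calculations.
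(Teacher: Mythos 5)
Your main argument is exactly the paper's proof: substitute the M\"obius formulas into the definitions of $\phi_k$ and $d_k$, swap the order of summation, count the supersets $A\supseteq B$ of size $k$ as $\binom{n-|B|}{k-|B|}$, and group by $j=|B|$ (the paper carries this out for Eq.~(\ref{eq:Mobiusc}) and leaves Eq.~(\ref{eq:Mobiusc1}) as ``similar,'' which your sign bookkeeping correctly supplies). The change-of-basis argument via Eq.~(\ref{eq:swrdf675sh}) is a valid alternative, but your primary route is essentially identical to the paper's.
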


\begin{proof}
By Eqs.~(\ref{eq:sdf675sh}) and (\ref{eq:Mobius}) we have
$$
\phi_k ~=~ \sum_{\textstyle{A\subseteq C\atop |A|=k}}\phi(A) ~=~ \sum_{\textstyle{A\subseteq C\atop |A|=k}}\sum_{B\subseteq A}d(B).
$$
Permuting the sums and then setting $j=|B|$, we obtain
$$
\phi_k ~=~ \sum_{\textstyle{B\subseteq C\atop |B|\leqslant k}}d(B)\sum_{\textstyle{A\supseteq B\atop |A|=k}} 1 ~=~ \sum_{\textstyle{B\subseteq C\atop |B|\leqslant k}}{n-|B|\choose k-|B|}{\,}d(B) ~=~ \sum_{j=0}^k{n-j\choose k-j}{\,}\sum_{\textstyle{B\subseteq C\atop |B|=j}}d(B),
$$
which proves Eq.~(\ref{eq:Mobiusc}). Formula~(\ref{eq:Mobiusc1}) can be established similarly.
\end{proof}

We are now ready to establish conversion formulas and algorithms as announced in the introduction.

\subsection{Conversions between $\bfs$ and $\bfS$}

We already know that the linear conversion formulas between the vectors $\bfs$ and $\bfS$ are given by Eqs.~(\ref{eq:SFromS}) and (\ref{eq:sFromS}). This conversion can also be explicitly expressed by means of a polynomial identity. Let $\sum_{k=1}^ns_k\, x^k$ and $\sum_{k=0}^n\overline{S}_k\, x^k$ be the {\em generating functions} of vectors $\bfs$ and $\bfS$, respectively. Then we have the polynomial identity
\begin{equation}\label{eq:sfd76dd}
\sum_{k=1}^ns_k\, x^k ~=~ 1+(x-1)\,\sum_{k=0}^n\overline{S}_k\, x^k.
\end{equation}
Indeed, using Eq.~(\ref{eq:sFromS}) and summation by parts, we obtain
$$
\sum_{k=1}^ns_k\, x^k ~=~ \sum_{k=1}^n\big(\overline{S}_{k-1}-\overline{S}_k\big)\, x^k ~=~ x+\sum_{k=1}^n\overline{S}_k\, \big(x^{k+1}-x^k\big)\, ,
$$
which proves Eq.~(\ref{eq:sfd76dd}).

For instance, for the bridge system described in Example~\ref{ex:21j4hg}, the generating functions of vectors $\bfs$ and $\bfS$ are given by $\frac{1}{5}{\,}x^2 + \frac{3}{5}{\,}x^3 + \frac{1}{5}{\,}x^4$ and $1 + x + \frac{4}{5}{\,}x^2 + \frac{1}{5}{\,}x^3$, respectively. We can easily verify that Eq.~(\ref{eq:sfd76dd}) holds for these functions.

\subsection{Conversions between $\bfS$ and $\bfd$}

Combining Eq.~(\ref{eq:SFromS}) with Eqs.~(\ref{eq:asad678}) and (\ref{eq:sdf675sh}), we observe that
\begin{equation}\label{eq:TailSignature2}
\overline{S}_k ~=~ \frac{1}{{n\choose k}}\,\sum_{\textstyle{A\subseteq C\atop |A|=n-k}}\,\phi(A) ~=~ \frac{1}{{n\choose k}}\,\phi_{n-k}{\,},\qquad k=0,\ldots,n.
\end{equation}

Recall that a \emph{path set} of the system is a component subset $A$ such that $\phi(A)=1$. It follows from Eq.~(\ref{eq:TailSignature2}) that $\phi_k$ is precisely the number of path sets of size $k$ and that $\overline{S}_{n-k}$ is the proportion of component subsets of size $k$ which are path sets. We also observe that the leading coefficient $d_n$ of $h(\bfx)$, also known as the \emph{signed domination} \cite{BarIye88} of $h(\bfx)$, is zero if and only if there are as many path sets of odd sizes as path sets of even sizes. This observation immediately follows from the identity $d_n=\sum_{j=0}^n (-1)^{n-j}{\,}\phi_j$, obtained by setting $k=n$ in Eq.~(\ref{eq:Mobiusc1}).

Combining Eqs.~(\ref{eq:Mobiusc}) and (\ref{eq:Mobiusc1}) with Eq.~(\ref{eq:TailSignature2}), we immediately obtain the following conversion formulas between the vectors $\bfS$ and $\bfd$.

\begin{proposition}
We have
\begin{eqnarray}
\overline{S}_k &=& \sum_{j=0}^{n-k}\frac{{n-j\choose k}}{{n\choose k}}\, d_j ~=~ \sum_{j=0}^{n-k}\frac{{n-k\choose j}}{{n\choose j}}\, d_j{\,},\qquad k=0,\ldots,n,\label{eq:fuji1}\\
d_k &=& {n\choose k}\sum_{j=0}^k(-1)^{k-j}{\,}{k\choose j}{\,}\overline{S}_{n-j}{\,},\qquad k=0,\ldots,n.\label{eq:fuji2}
\end{eqnarray}
\end{proposition}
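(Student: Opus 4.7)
My plan is to follow the hint given just before the statement: combine the Möbius-type relations (\ref{eq:Mobiusc}) and (\ref{eq:Mobiusc1}) with the identity (\ref{eq:TailSignature2}), using only a couple of routine binomial manipulations as glue.

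First, I would rewrite (\ref{eq:TailSignature2}) as $\phi_{n-k}={n\choose k}\overline{S}_k$, or equivalently (using ${n\choose n-j}={n\choose j}$) as $\phi_j={n\choose j}\overline{S}_{n-j}$. For (\ref{eq:fuji1}), I would substitute $k\mapsto n-k$ in (\ref{eq:Mobiusc}) to obtain $\phi_{n-k}=\sum_{j=0}^{n-k}{n-j\choose n-k-j}\,d_j$ and then divide both sides by ${n\choose k}$. Using the symmetry ${n-j\choose n-k-j}={n-j\choose k}$ yields the first expression. For the second expression in (\ref{eq:fuji1}), I would verify the binomial identity
\[
\frac{{n-j\choose k}}{{n\choose k}} ~=~ \frac{{n-k\choose j}}{{n\choose j}}
\]
by writing both sides as $\frac{(n-j)!(n-k)!}{n!(n-j-k)!}$, which covers the remaining equality.

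For (\ref{eq:fuji2}), I would plug $\phi_j={n\choose j}\overline{S}_{n-j}$ directly into (\ref{eq:Mobiusc1}), obtaining
\[
d_k ~=~ \sum_{j=0}^k(-1)^{k-j}{n-j\choose k-j}{n\choose j}\,\overline{S}_{n-j},
\]
and then pull the factor ${n\choose k}$ out using the trinomial-revision identity ${n-j\choose k-j}{n\choose j}={n\choose k}{k\choose j}$, which is immediate from expanding both sides as $\frac{n!}{j!(k-j)!(n-k)!}$.

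There is no real obstacle: the whole proof is the substitution of (\ref{eq:TailSignature2}) into the two formulas of the previous proposition, polished by the two elementary binomial identities above. The only place where a reader might want care is verifying that the indices of summation match after the substitution $k\mapsto n-k$ in (\ref{eq:Mobiusc}), so I would flag that ${n-j\choose k-j}$ vanishes for $j>k$, which justifies extending or truncating the sums as needed.
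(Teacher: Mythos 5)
Your proof is correct and follows exactly the route the paper intends: the paper gives no separate proof, simply noting that the proposition follows by combining Eqs.~(\ref{eq:Mobiusc}) and (\ref{eq:Mobiusc1}) with Eq.~(\ref{eq:TailSignature2}), which is precisely your substitution $\phi_{n-k}={n\choose k}\overline{S}_k$ together with the two elementary binomial identities you verify. Your added care about the index range and the vanishing of ${n-j\choose k-j}$ for $j>k$ only makes the ``immediate'' step explicit.
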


Equation~(\ref{eq:fuji2}) can be rewritten in a simpler form by using the classical difference operator $\Delta_i$ which maps a sequence $z_i$ to the sequence $\Delta_iz_i=z_{i+1}-z_i$. Defining the $k$-th difference $\Delta_i^k z_i$ of a sequence $z_i$ recursively as $\Delta_i^0z_i=z_i$ and $\Delta_i^kz_i=\Delta_i^{\mathstrut}\Delta_i^{k-1}z_i$, we can show by induction on $k$ that
\begin{equation}\label{eq:ddetp}
\Delta_i^kz_i ~=~ \sum_{j=0}^k(-1)^{k-j}{\,}{k\choose j}{\,}z_{i+j}{\,}.
\end{equation}
Comparing Eq.~(\ref{eq:fuji2}) with Eq.~(\ref{eq:ddetp}) immediately shows that Eq.~(\ref{eq:fuji2}) can be rewritten as
\begin{equation}\label{eq:fuji2p}
d_k ~=~ {n\choose k}\,\big(\Delta_i^k{\,}\overline{S}_{n-i}\big)\big|_{i=0}{\,},\qquad k=1,\ldots,n,
\end{equation}
and the vector $\bfd$ can then be computed efficiently from a classical difference table (see Table~\ref{tab:112a}).

\begin{table}[htbp]
$$
\begin{array}{c|ccc}
\overline{S}_n & & & \\
 & {n\choose 1}(\Delta_i\overline{S}_{n-i})|_{i=0} & & \\
\overline{S}_{n-1} & & {n\choose 2}(\Delta^2_i\overline{S}_{n-i})|_{i=0} & \\
 & {n\choose 1}(\Delta_i\overline{S}_{n-i})|_{i=1} & & {n\choose 3}(\Delta_i^3\overline{S}_{n-i})|_{i=0} \\
\overline{S}_{n-2} & & {n\choose 2}(\Delta^2_i\overline{S}_{n-i})|_{i=1} & \vdots \\
 & {n\choose 1}(\Delta_i\overline{S}_{n-i})|_{i=2} & \vdots & \\
\overline{S}_{n-3} & \vdots & & \\
\vdots & & &
\end{array}
$$
\caption{Computation of $\bfd$ from $\bfS$}
\label{tab:112a}
\end{table}

Setting $D_{j,k}={n\choose k}(\Delta^k_i{\,}\overline{S}_{n-i})|_{i=j}$, from Eq.~(\ref{eq:fuji2p}) we can easily derive the following algorithm for the computation of $\bfd$. This algorithm requires only $\frac{1}{2}n(n+1)$ additions and multiplications.

\begin{algorithm}\label{algo:s4ex}
The following algorithm inputs vector $\bfS$ and outputs vector $\bfd$. It uses the variables $D_{j,k}$ for $k=0,\ldots,n$ and $j=0,\ldots,n-k$.

\smallskip

\begin{quote}
\begin{enumerate}
\item[\textbf{Step 1.}] For $j=0,\ldots,n$, set $D_{j,0}:=\overline{S}_{n-j}$.

\item[\textbf{Step 2.}] For $k=1,\ldots,n$

\mbox{}\hspace{3ex}For $j=0,\ldots,n-k$

\mbox{}\hspace{6ex}$D_{j,k} ~:=~ \frac{n-k+1}{k}\,(D_{j+1,k-1}-D_{j,k-1})$

\item[\textbf{Step 3.}] For $k=0,\ldots,n$, set $d_k:=D_{0,k}$.
\end{enumerate}
\end{quote}

\smallskip
\end{algorithm}

\begin{example}\label{eq:sda68}
Consider the bridge system described in Example~\ref{ex:21j4hg}. The corresponding tail signature vector is given by $\bfS=(1,1,\frac{4}{5},\frac{1}{5},0,0)$. Forming the difference table (see Table~\ref{tab:112ab}) and reading its first row, we obtain the vector $\bfd=(0,2,2,-5,2)$ and therefore the function $h(x)=2x^2+2x^3-5x^4+2x^5$.
\end{example}

\begin{table}[htbp]
$$
\begin{array}{c|ccccc}
0 & & & & &\\
 & 0 & & & &\\
0 & & 2 & & &\\
 & 1 & & 2 & &\\
1/5 & & 4 & & -5 &\\
 & 3 & & -8 & & 2\\
4/5 & & -4 & & 5 & \\
 & 1 & & 2 & & \\
1 & & -2 & & & \\
& 0 & & & &\\
1 & & & & &
\end{array}
$$
\caption{Computation of $\bfd$ from $\bfS$ (Example~\ref{eq:sda68})}
\label{tab:112ab}
\end{table}

The converse transformation (\ref{eq:fuji1}) can then be computed efficiently by the following algorithm, in which we compute the quantities
$$
S_{j,k} ~=~ \sum_{i=0}^k \frac{{k\choose i}{i+j\choose i}}{{n-j\choose i}}{\,}d_{i+j}{\,}.
$$

\begin{algorithm}\label{algo:s4ex}
The following algorithm inputs vector $\bfd$ and outputs vector $\bfS$. It uses the variables $S_{j,k}$ for $k=0,\ldots,n$ and $j=0,\ldots,n-k$.

\smallskip

\begin{quote}
\begin{enumerate}
\item[\textbf{Step 1.}] For $j=0,\ldots,n$, set $S_{j,0}:=d_j$.

\item[\textbf{Step 2.}] For $k=1,\ldots,n$

\mbox{}\hspace{3ex}For $j=0,\ldots,n-k$

\mbox{}\hspace{6ex}$S_{j,k} ~:=~ \frac{j+1}{n-j}{\,}S_{j+1,k-1}+S_{j,k-1}$

\item[\textbf{Step 3.}] For $k=0,\ldots,n$, set $\overline{S}_{n-k}:=S_{0,k}$.
\end{enumerate}
\end{quote}

\smallskip
\end{algorithm}

\subsection{Conversions between $\bfs$ and $\bfd$}

The following proposition yields the conversion formulas between the vectors $\bfs$ and $\bfd$. Note that a non-explicit version of Eq.~(\ref{eq:fuji3}) was previously found in Boland et al.~\cite{BolSamVes03} (see also Theorem~6.1 in \cite{Sam07}).

\begin{proposition}
We have
\begin{eqnarray}
s_k &=& \sum_{j=0}^{n-k}\frac{{n-j\choose k}}{{n\choose k}}\,\frac{j+1}{n-j}{\,}d_{j+1} ~=~ \sum_{j=1}^{n-k+1}\frac{{n-k\choose j-1}}{{n\choose j}}{\,}d_{j}{\,},\qquad k=1,\ldots,n,\label{eq:fuji3}\\
d_k &=& {n\choose k}\sum_{j=0}^{k-1}(-1)^{k-1-j}{\,}{k-1\choose j}{\,}s_{n-j}{\,},\qquad k=1,\ldots,n.\label{eq:fuji5}\\
d_k &=& {n\choose k}\,\big(\Delta_i^{k-1}{\,}s_{n-i}\big)\big|_{i=0}^{\mathstrut}{\,},\qquad k=1,\ldots,n,\label{eq:fuji4}
\end{eqnarray}
\end{proposition}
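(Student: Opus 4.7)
My strategy is to reduce all three equations of the proposition to the previously established conversion formulas between $\bfS$ and $\bfd$, namely Eqs.~(\ref{eq:fuji1}) and (\ref{eq:fuji2p}), combined with the elementary telescoping identity $s_k=\overline{S}_{k-1}-\overline{S}_k$ of (\ref{eq:sFromS}). This keeps the argument entirely within the framework already developed and avoids having to return to the set-function definition of $\phi$ or to Eq.~(\ref{eq:asad678}).

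For Eq.~(\ref{eq:fuji3}), the plan is to start from $s_k=\overline{S}_{k-1}-\overline{S}_k$ and substitute the second expression in (\ref{eq:fuji1}) into each term. After combining into a single sum (the stray $j=n-k+1$ term from $\overline{S}_{k-1}$ contributing $1/{n\choose n-k+1}\,d_{n-k+1}$) and applying Pascal's rule ${n-k+1\choose j}-{n-k\choose j}={n-k\choose j-1}$ to the remaining coefficients, one directly reads off the second expression in (\ref{eq:fuji3}); the $j=0$ term vanishes under the convention ${n-k\choose -1}=0$, hence the sum may start at $j=1$. The first expression then follows by reindexing $j\to j+1$ and verifying the factorial identity
$$
\frac{{n-k\choose j}}{{n\choose j+1}} ~=~ \frac{{n-j\choose k}}{{n\choose k}}\cdot\frac{j+1}{n-j}
$$
by direct expansion of factorials on both sides.

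For Eqs.~(\ref{eq:fuji5}) and (\ref{eq:fuji4}), I set $z_i:=\overline{S}_{n-i}$ and observe that (\ref{eq:sFromS}) gives $\Delta_i z_i=\overline{S}_{n-i-1}-\overline{S}_{n-i}=s_{n-i}$, so $\Delta_i^k\,\overline{S}_{n-i}=\Delta_i^{k-1}\,s_{n-i}$ for every $k\geqslant 1$. Substituting this into Eq.~(\ref{eq:fuji2p}) at once yields (\ref{eq:fuji4}), and expanding the $(k-1)$-st difference via (\ref{eq:ddetp}) yields (\ref{eq:fuji5}).

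The main obstacle is purely bookkeeping: the mismatched upper index in the subtraction of the two tail-signature sums must be handled carefully, and the factorial identity linking the two equivalent forms of $s_k$ in (\ref{eq:fuji3}) must be checked. Both steps are routine once one adopts the convention ${n-k\choose -1}=0$, so no deep difficulty arises.
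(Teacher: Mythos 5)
Your proposal is correct and follows essentially the same route as the paper: Eq.~(\ref{eq:fuji3}) is obtained by substituting Eq.~(\ref{eq:fuji1}) into $s_k=\overline{S}_{k-1}-\overline{S}_k$ and applying Pascal's rule, while Eqs.~(\ref{eq:fuji4}) and (\ref{eq:fuji5}) follow from $\Delta_i\overline{S}_{n-i}=s_{n-i}$ combined with Eq.~(\ref{eq:fuji2p}) and the expansion (\ref{eq:ddetp}). Your explicit verification of the factorial identity linking the two forms of $s_k$ in (\ref{eq:fuji3}) is a small addition the paper leaves implicit, but it does not change the argument.
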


\begin{proof}
Combining Eq.~(\ref{eq:fuji1}) with Eq.~(\ref{eq:sFromS}), we obtain
\begin{eqnarray*}
s_k ~=~ \overline{S}_{k-1}-\overline{S}_k &=& \sum_{j=1}^{n-k+1}\frac{{n-k+1\choose j}}{{n\choose j}}\, d_j-\sum_{j=1}^{n-k}\frac{{n-k\choose j}}{{n\choose j}}\, d_j\\
&=& \sum_{j=1}^{n-k}\frac{{n-k\choose j-1}}{{n\choose j}}\, d_j + \frac{1}{{n\choose n-k+1}}{\,}d_{n-k+1}{\,},
\end{eqnarray*}
which proves Eq.~(\ref{eq:fuji3}). By Eq.~(\ref{eq:sFromS}) we have $\Delta_i\overline{S}_{n-i}=s_{n-i}$ for $i=0,\ldots,n-1$. Equation~(\ref{eq:fuji4}) then follows from Eq.~(\ref{eq:fuji2p}). Equation~(\ref{eq:fuji5}) then follows immediately from Eq.~(\ref{eq:fuji4}).
\end{proof}

Equation~(\ref{eq:fuji4}) shows that $\bfd$ can be efficiently computed directly from $\bfs$ by means of a difference table (see Table~\ref{tab:11g2a}).

\begin{table}[htbp]
$$
\begin{array}{c|ccc}
{n\choose 1}s_n & & & \\
 & {n\choose 2}(\Delta_is_{n-i})|_{i=0} & & \\
{n\choose 1}s_{n-1} & & {n\choose 3}(\Delta^2_is_{n-i})|_{i=0} & \\
 & {n\choose 2}(\Delta_is_{n-i})|_{i=1} & & {n\choose 4}(\Delta_i^3s_{n-i})|_{i=0} \\
{n\choose 1}s_{n-2} & & {n\choose 3}(\Delta^2_is_{n-i})|_{i=1} & \vdots \\
 & {n\choose 2}(\Delta_is_{n-i})|_{i=2} & \vdots & \\
{n\choose 1}s_{n-3} & \vdots & & \\
\vdots & & &
\end{array}
$$
\caption{Computation of $\bfd$ from $\bfs$}
\label{tab:11g2a}
\end{table}

Setting $d_{j,k}={n\choose k}(\Delta^{k-1}_is_{n-i})|_{i=j-1}$, we can also derive the following algorithm for the computation of vector $\bfd$. This algorithm requires only $\frac{1}{2}n(n-1)$ additions and multiplications.

\begin{algorithm}\label{algo:s1}
The following algorithm inputs vector $\bfs$ and outputs vector $\bfd$. It uses the variables $d_{j,k}$ for $k=1,\ldots,n$ and $j=1,\ldots,n-k+1$.

\smallskip

\begin{quote}
\begin{enumerate}
\item[\textbf{Step 1.}] For $j=1,\ldots,n$, set $d_{j,1}:=n{\,}s_{n-j+1}$.

\item[\textbf{Step 2.}] For $k=2,\ldots,n$

\mbox{}\hspace{3ex}For $j=1,\ldots,n-k+1$

\mbox{}\hspace{6ex}$d_{j,k} ~:=~ \frac{n-k+1}{k}{\,}(d_{j+1,k-1}-d_{j,k-1})$

\item[\textbf{Step 3.}] For $k=1,\ldots,n$, set $d_k := d_{1,k}$.
\end{enumerate}
\end{quote}

\smallskip
\end{algorithm}

\begin{example}\label{eq:sda6f8}
Consider again the bridge system described in Example~\ref{ex:21j4hg}. The corresponding signature vector is given by $\bfs=(0,\frac{1}{5},\frac{3}{5},\frac{1}{5},0)$. Forming the difference table (see Table~\ref{tab:11f2ab}) and reading its first row, we obtain the vector $\bfd=(0,2,2,-5,2)$ and hence the function $h(x)=2x^2+2x^3-5x^4+2x^5$.
\end{example}

\begin{table}[htbp]
$$
\begin{array}{c|cccc}
0 & & & &\\
 & 2 & & &\\
1 & & 2 & &\\
 & 4 & & -5 &\\
3 & & -8 & & 2\\
 & -4 & & 5 &\\
1 & & 2 & &\\
 & -2 & & &\\
0 & & & &
\end{array}
$$
\caption{Computation of $\bfd$ from $\bfs$ (Example~\ref{eq:sda6f8})}
\label{tab:11f2ab}
\end{table}

The converse transformation (\ref{eq:fuji3}) can then be computed efficiently by the following algorithm, in which we compute the quantities
$$
s_{j,k} ~=~ \frac{1}{n}\sum_{i=1}^{k}\frac{{k-1\choose i-1}{i+j-1\choose i-1}}{{n-j\choose i-1}}{\,}d_{i+j-1}{\,}.
$$

\begin{algorithm}\label{algo:s4ex6}
The following algorithm inputs vector $\bfd$ and outputs vector $\bfs$. It uses the variables $s_{j,k}$ for $k=1,\ldots,n$ and $j=1,\ldots,n-k+1$.

\smallskip

\begin{quote}
\begin{enumerate}
\item[\textbf{Step 1.}] For $j=1,\ldots,n$, set $s_{j,1}:=\frac{1}{n}{\,}d_j$.

\item[\textbf{Step 2.}] For $k=2,\ldots,n$

\mbox{}\hspace{3ex}For $j=1,\ldots,n-k+1$

\mbox{}\hspace{6ex}$s_{j,k} ~:=~ \frac{j+1}{n-j}{\,}s_{j+1,k-1}+s_{j,k-1}$

\item[\textbf{Step 3.}] For $k=1,\ldots,n$, set $s_{n-k+1}:=s_{1,k}$.
\end{enumerate}
\end{quote}

\smallskip
\end{algorithm}

\subsection{Conversions between $\bfS$ or $\bfs$ and $h(x)$}

The conversion formulas between vectors $\bfs$ and $\bfd$ show that the diagonal section $h(x)$ of the reliability function encodes exactly the signature (or equivalently, the tail signature), no more, no less. Even though the latter can be computed from vector $\bfd$ using Eqs.~(\ref{eq:fuji1}) and (\ref{eq:fuji3}), we will now see how we can compute it by direct and simple algebraic manipulations of function $h(x)$.

Let $f$ be a univariate polynomial of degree $\leqslant n$,
$$
f(x) ~=~ a_n\, x^n+a_{n-1}\, x^{n-1} + \cdots + a_1\, x+ a_0\, .
$$
The $n$-\emph{reflected} of $f$ is the polynomial $R^n f$ obtained from $f$ by switching the coefficients of $x^k$ and $x^{n-k}$ for $k=0,\ldots,n$; that is,
$$
(R^n f)(x) ~=~ a_0\, x^n+a_1\, x^{n-1}+\cdots + a_{n-1}\, x + a_n\, ,
$$
or equivalently, $(R^n f)(x)=x^n\, f(1/x)$.

Combining Eq.~(\ref{eq:swrdf675sh}) with Eq.~(\ref{eq:TailSignature2}), we obtain (see also \cite{BolSamVes03})
\begin{equation}\label{eq:aersd76fds}
h(x) ~=~ \sum_{k=0}^n \overline{S}_{n-k}{\,}{n\choose k}{\,}x^k(1-x)^{n-k}.
\end{equation}
From this equation it follows, as it was already observed in \cite{MarMatb}, that
\begin{equation}\label{eq:asd76fds}
(R^nh)(x+1) ~=~ \sum_{k=0}^n{n\choose k}\overline{S}_k{\,}x^k.
\end{equation}
Thus, ${n\choose k}\overline{S}_k$ can be obtained simply by reading the coefficient of $x^k$ in the polynomial function $(R^nh)(x+1)$. Denoting by $[x^k]f(x)$ the coefficient of $x^k$ in a polynomial function $f(x)$, Eq.~(\ref{eq:asd76fds}) can be rewritten as
\begin{equation}\label{eq:lj3kl1}
{n\choose k}\overline{S}_k ~=~ [x^k](R^nh)(x+1),\qquad k=0,\ldots,n.
\end{equation}
From Eq.~(\ref{eq:lj3kl1}) we immediately derive the following algorithm (see also \cite{MarMatb}).

\begin{algorithm}\label{algo:s55}
The following algorithm inputs $n$ and $h(x)$ and outputs $\bfS$.

\smallskip

\begin{quote}
\begin{enumerate}
\item[\textbf{Step 1.}] For $k=0,\ldots,n$, let $a_k$ be the coefficient of $x^k$ in the $n$-degree polynomial $(R^nh)(x+1) = (x+1)^n\, h\big(\frac{1}{x+1}\big)$.

\item[\textbf{Step 2.}] We have $\overline{S}_k = a_k/{n\choose k}$ for $k=0,\ldots,n$.
\end{enumerate}
\end{quote}

\smallskip
\end{algorithm}

The following proposition yields the analog of Eqs.~(\ref{eq:asd76fds}) and (\ref{eq:lj3kl1}) for the signature. Here and throughout we denote by $h'(x)$ the derivative of $h(x)$.

\begin{proposition}\label{prop:bth49}
We have
\begin{eqnarray}
&& k{\,}{n\choose k}{\,}s_k ~=~ [x^{k-1}](R^{n-1}h')(x+1),\qquad k=1,\ldots,n,\label{eq:s87sf6sd}\\
&& \sum_{k=1}^n {n\choose k}\, s_k\, k\, x^{k-1} ~=~ (R^{n-1}h')(x+1)\, ,\label{eq:sfd6543}\\
&& \sum_{k=1}^n{n\choose k}{\,}s_k{\,}x^k ~=~ \int_0^x(R^{n-1}h')(t+1)\, dt\, .\label{eq:sfrrd6543}
\end{eqnarray}
\end{proposition}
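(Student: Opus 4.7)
The plan is to derive all three identities from the already-established formula Eq.~(\ref{eq:asd76fds}), namely $(R^nh)(x+1) = \sum_{k=0}^n \binom{n}{k}\overline{S}_k\, x^k$, combined with the simple link $s_k = \overline{S}_{k-1}-\overline{S}_k$.

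First I would establish a product-rule identity linking $(R^nh)(x+1)$ and $(R^{n-1}h')(x+1)$. Writing $(R^nh)(x+1)=(x+1)^n h(1/(x+1))$ and differentiating in $x$ yields
$$\frac{d}{dx}(R^nh)(x+1)\;=\;n(x+1)^{n-1}h\!\left(\tfrac{1}{x+1}\right) - (x+1)^{n-2}h'\!\left(\tfrac{1}{x+1}\right).$$
Since $(x+1)^{n-1}h(1/(x+1)) = (R^nh)(x+1)/(x+1)$ and $(x+1)^{n-1}h'(1/(x+1)) = (R^{n-1}h')(x+1)$, this rearranges to the key relation
$$(R^{n-1}h')(x+1) \;=\; n\,(R^nh)(x+1) - (x+1)\,\frac{d}{dx}(R^nh)(x+1).$$

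Second, I would substitute Eq.~(\ref{eq:asd76fds}) into the right-hand side of this relation. Splitting $(x+1)\frac{d}{dx}$ as $x\frac{d}{dx}+\frac{d}{dx}$, shifting the index $k\mapsto k+1$ in the purely differentiating piece, and using that the $k=n$ contribution vanishes, the coefficient of $x^k$ for $k=0,\ldots,n-1$ becomes
$$(n-k)\tbinom{n}{k}\overline{S}_k - (k+1)\tbinom{n}{k+1}\overline{S}_{k+1}.$$
Now the symmetry $(n-k)\binom{n}{k}=(k+1)\binom{n}{k+1}$ factors this as $(k+1)\binom{n}{k+1}(\overline{S}_k-\overline{S}_{k+1}) = (k+1)\binom{n}{k+1}s_{k+1}$. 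Reindexing $m=k+1$ yields exactly Eq.~(\ref{eq:sfd6543}).

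Finally, Eq.~(\ref{eq:s87sf6sd}) follows immediately by extracting the coefficient of $x^{k-1}$ in Eq.~(\ref{eq:sfd6543}), and Eq.~(\ref{eq:sfrrd6543}) follows by integrating both sides of Eq.~(\ref{eq:sfd6543}) from $0$ to $x$, since the left-hand side of the integrated identity has no constant term. The only computational step requiring any care is the index-shift and binomial simplification in the second paragraph; the rest is mechanical. I do not anticipate a genuine obstacle, since the proposition is essentially a differentiated/integrated repackaging of Eq.~(\ref{eq:asd76fds}) once the identity relating $R^n h$ and $R^{n-1}h'$ under the shift $y\mapsto x+1$ is in hand.
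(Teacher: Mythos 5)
Your proof is correct, but it takes a genuinely different route from the paper's. The paper proves Eq.~(\ref{eq:s87sf6sd}) by brute force: it writes $h'(x)=\sum_{j=0}^{n-1}(j+1)d_{j+1}x^j$, expands $(R^{n-1}h')(x+1)=\sum_j (j+1)d_{j+1}(x+1)^{n-1-j}$ binomially, swaps the order of summation, and recognizes the resulting coefficient of $x^{k-1}$ as $k\binom{n}{k}s_k$ via the previously established conversion formula (\ref{eq:fuji3}) between $\bfs$ and $\bfd$. You instead bypass the domination vector entirely: your key relation
\[
(R^{n-1}h')(x+1) \;=\; n\,(R^nh)(x+1) - (x+1)\,\tfrac{d}{dx}(R^nh)(x+1)
\]
(which I have checked, including the identification $(x+1)^{n-1}h'(1/(x+1))=(R^{n-1}h')(x+1)$, valid because $\deg h'\leqslant n-1$) lets you deduce the proposition directly from the tail-signature identity (\ref{eq:asd76fds}) together with $s_k=\overline{S}_{k-1}-\overline{S}_k$ and the binomial symmetry $(n-k)\binom{n}{k}=(k+1)\binom{n}{k+1}$; the index shift and the vanishing of the $k=n$ term are handled correctly. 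What the paper's route buys is a short, self-contained computation once (\ref{eq:fuji3}) is available; what your route buys is independence from (\ref{eq:fuji3}) and from $\bfd$ altogether, and it makes transparent that (\ref{eq:s87sf6sd}) is literally the ``differentiated'' form of (\ref{eq:lj3kl1}), at the modest cost of establishing the commutation relation between reflection, the shift $x\mapsto x+1$, and differentiation.
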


\begin{proof}
By Eq.~(\ref{eq:swrdf675sh}) we have $h'(x) = \sum_{j=0}^{n-1}(j+1){\,}d_{j+1}{\,}x^j$ and therefore
\begin{eqnarray*}
(R^{n-1}h')(x+1) &=& \sum_{j=0}^{n-1}(j+1){\,}d_{j+1}(x+1)^{n-1-j}\\
&=& \sum_{j=0}^{n-1}(j+1){\,}d_{j+1}\sum_{k=1}^{n-j}{n-1-j\choose k-1}{\,}x^{k-1}\\
&=& \sum_{k=1}^nx^{k-1}\sum_{j=0}^{n-k}{n-1-j\choose k-1}(j+1){\,}d_{j+1}.
\end{eqnarray*}
Thus, the inner sum in the latter expression is the coefficient of $x^{k-1}$ in the polynomial function $(R^{n-1}h')(x+1)$. Dividing this sum by $k{n\choose k}$ and then using Eq.~(\ref{eq:fuji3}), we obtain $s_k$. This proves Eqs.~(\ref{eq:s87sf6sd}) and (\ref{eq:sfd6543}). Equation (\ref{eq:sfrrd6543}) is then obtained by integrating both sides of Eq.~(\ref{eq:sfd6543}) on the interval $[0,x]$.
\end{proof}

From Eq.~(\ref{eq:s87sf6sd}) we immediately derive the following algorithm.

\begin{algorithm}\label{algo:s}
The following algorithm inputs $n$ and $h(x)$ and outputs $\mathbf{s}$.

\smallskip

\begin{quote}
\begin{enumerate}
\item[\textbf{Step 1.}] For $k=1,\ldots,n$, let $a_{k-1}$ be the coefficient of $x^{k-1}$ in the $(n-1)$-degree polynomial $(R^{n-1}h')(x+1) = (x+1)^{n-1}\, h'\big(\frac{1}{x+1}\big)$.

\item[\textbf{Step 2.}] We have $s_k = a_{k-1}/(k\,{n\choose k})$ for $k=1,\ldots,n$.
\end{enumerate}
\end{quote}

\smallskip
\end{algorithm}

Even though such an algorithm can be easily executed by hand for small $n$, a computer algebra system can be of great assistance for large $n$.

\begin{example}\label{ex:21j4hg3z}
Consider again the bridge system described in Example~\ref{ex:21j4hg}. We have
$$
h'(x) ~=~ 4x+6x^2-20x^3+10x^4\quad\mbox{and}\quad (R^4h')(x) ~=~ 10-20x+6x^2+4x^3.
$$
It follows that $(R^4h')(x+1)=4 x + 18 x^2 + 4 x^3$ and hence $\mathbf{s}=\big(0,\frac{1}{5},\frac{3}{5},\frac{1}{5},0\big)$ by Algorithm~\ref{algo:s}. Indeed, we have for instance $s_3=a_2/(3{5\choose 3}) =\frac{3}{5}$.
\end{example}

The following proposition, established in \cite{MarMatb}, provides a necessary and sufficient condition on the system signature for the reliability function to be of full degree (i.e., the corresponding signed domination $d_n$ is nonzero). Here we provide a shorter proof based on Eq.~(\ref{eq:sfd6543}).

\begin{proposition}[{\cite{MarMatb}}]\label{prop:sad6f}
Let $(C,\phi)$ be an $n$-component semicoherent system with continuous and i.i.d.\ component lifetimes. Then the reliability function $h(\bfx)$ (or equivalently, its
diagonal section $h(x)$) is a polynomial of degree $n$ if and only if
$$
\sum_{\mbox{$k$ odd}}{n-1\choose k-1}\, s_k ~\neq ~\sum_{\mbox{$k$ even}}{n-1\choose k-1}\, s_k\, .
$$
\end{proposition}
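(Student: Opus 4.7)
The plan is to identify $d_n$ as a signed sum of the $s_k$ by evaluating the polynomial identity~(\ref{eq:sfd6543}) at a convenient point, and then use the fact that $h(x)$ has degree $n$ if and only if $d_n\neq 0$.

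First, I would set $x=-1$ in Eq.~(\ref{eq:sfd6543}). The left-hand side becomes
$$
\sum_{k=1}^n {n\choose k}\, k\,(-1)^{k-1}\, s_k\, ,
$$
while the right-hand side becomes $(R^{n-1}h')(0)$, which is the constant term of the reflected polynomial $R^{n-1}h'$. By the definition of the $(n-1)$-reflection, this constant term equals the coefficient of $x^{n-1}$ in $h'(x)$. Since $h'(x)=\sum_{j=0}^{n-1}(j+1){\,}d_{j+1}{\,}x^j$ by Eq.~(\ref{eq:swrdf675sh}), that coefficient is precisely $n{\,}d_n$.

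Next, I would apply the standard identity $k{n\choose k}=n{n-1\choose k-1}$ on the left-hand side and cancel the common factor of $n$. This yields
$$
d_n ~=~ \sum_{k=1}^n(-1)^{k-1}\,{n-1\choose k-1}{\,}s_k ~=~ \sum_{\text{$k$ odd}}{n-1\choose k-1}{\,}s_k-\sum_{\text{$k$ even}}{n-1\choose k-1}{\,}s_k\, .
$$
Finally, since $h(x)$ is a polynomial of degree at most $n$ with leading coefficient $d_n$ (by Eq.~(\ref{eq:swrdf675sh})), the condition that $h(x)$ be of degree exactly $n$ is equivalent to $d_n\neq 0$, which by the displayed identity is equivalent to the stated inequality between the two sums.

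I expect no serious obstacle: the argument is essentially a single substitution into an already established identity, together with the routine binomial identity $k{n\choose k}=n{n-1\choose k-1}$. The only point requiring care is keeping track of the reflection operator, specifically recognizing that $(R^{n-1}h')(0)$ extracts the top-degree coefficient of $h'$ rather than its constant term.
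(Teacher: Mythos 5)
Your argument is correct and is essentially the paper's own proof: both evaluate Eq.~(\ref{eq:sfd6543}) at $x=-1$ to identify $(R^{n-1}h')(0)=n\,d_n$ with the alternating sum $\sum_k(-1)^{k-1}k{n\choose k}s_k=n\sum_k(-1)^{k-1}{n-1\choose k-1}s_k$, and then observe that $h(x)$ has degree $n$ exactly when $d_n\neq 0$. The only cosmetic difference is that the paper phrases the degree condition via $h'$ having degree $n-1$, while you read off the leading coefficient of $h(x)$ directly.
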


\begin{proof}
The function $h(x)$ is of degree $n$ if and only if $h'(x)$ is of degree $n-1$ and this condition holds if and only if $d_n=\frac{1}{n}(R^{n-1}h')(0)\neq 0$. By Eq.~(\ref{eq:sfd6543}) this means that
$$
\sum_{k=1}^n {n\choose k}\, s_k\, k\, (-1)^{k-1} ~=~ n\,\sum_{k=1}^n {n-1\choose k-1}\, s_k\, (-1)^{k-1}
$$
is not zero.
\end{proof}

The vectors $\bfs$ and $\bfS$ can also be computed via their generating functions. The following proposition yields integral formulas for these functions.

\begin{proposition}
We have
\begin{eqnarray}
\sum_{k=0}^n \overline{S}_k\, x^k &=& \int_0^1 (n+1)\, R^n_t\, \big((R^{n}h)((t-1)\, x+1)\big)\, dt\, ,\label{eq:sfd76ddd}\\
\sum_{k=1}^n s_k\, x^k &=& \int_0^1 x\, R^{n-1}_t\, \big((R^{n-1}h')((t-1)\, x+1)\big)\, dt\, ,\label{prop:sdf67g}
\end{eqnarray}
where $R_t^n$ is the $n$-reflection with respect to variable $t$.
\end{proposition}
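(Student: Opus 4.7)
The plan is to derive both identities by starting from the generating-function formulas already established, namely Eq.~(\ref{eq:asd76fds}) for $\bfS$ and Eq.~(\ref{eq:sfd6543}) for $\bfs$, and then to use the reflection $R^n_t$ combined with a Beta-function integral to convert the binomial factors $\binom{n}{k}$ (respectively $k\binom{n}{k}$) into $1$ (resp.\ $1$).

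For the first identity, I would substitute $y=(t-1)x$ into Eq.~(\ref{eq:asd76fds}) to obtain
$$
(R^nh)((t-1)x+1) ~=~ \sum_{k=0}^n\binom{n}{k}\overline{S}_k{\,}(t-1)^k x^k,
$$
which I treat as a polynomial of degree at most $n$ in $t$ (with $x$ as parameter). The key observation is that, since $(t-1)^k$ has degree $k\leqslant n$, the reflection formula $(R^n_t p)(t)=t^n p(1/t)$ gives
$$
R^n_t\bigl[(t-1)^k\bigr] ~=~ t^n\Bigl(\frac{1}{t}-1\Bigr)^{\!k} ~=~ t^{n-k}(1-t)^k.
$$
Applying $R^n_t$ term by term and then integrating over $[0,1]$, the Beta-function identity $\int_0^1 t^{n-k}(1-t)^k{\,}dt = \frac{k!(n-k)!}{(n+1)!} = \frac{1}{(n+1)\binom{n}{k}}$ cancels the factors $\binom{n}{k}$ and produces the common factor $\frac{1}{n+1}$, which I clear by multiplying by $n+1$. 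This yields Eq.~(\ref{eq:sfd76ddd}).

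For the second identity the argument is entirely parallel, starting from Eq.~(\ref{eq:sfd6543}): substituting $y=(t-1)x$ gives a polynomial in $t$ of degree at most $n-1$, so the appropriate reflection is $R^{n-1}_t$, which sends $(t-1)^{k-1}$ to $t^{n-k}(1-t)^{k-1}$. The corresponding Beta integral is $\int_0^1 t^{n-k}(1-t)^{k-1}{\,}dt = \frac{1}{k\binom{n}{k}}$, which this time cancels the entire factor $k\binom{n}{k}$, leaving $\sum_{k=1}^n s_k{\,}x^{k-1}$; a final multiplication by $x$ produces Eq.~(\ref{prop:sdf67g}). The only step requiring a little care is the bookkeeping for the reflection operator---checking that $(t-1)^k$ and $(t-1)^{k-1}$ are indeed of degree within the reflection bound so that the clean identity $R^m_t[(t-1)^j]=t^{m-j}(1-t)^j$ applies---but once this is in place everything reduces to the Beta-function computation above.
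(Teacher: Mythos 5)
Your proposal is correct and follows essentially the same route as the paper: substitute $(t-1)x+1$ into Eq.~(\ref{eq:asd76fds}) (resp.\ Eq.~(\ref{eq:sfd6543})), apply the reflection $R^n_t$ (resp.\ $R^{n-1}_t$) term by term to turn $(t-1)^k$ into $t^{n-k}(1-t)^k$, and integrate using the Beta identity $\int_0^1 t^{n-k}(1-t)^k\,dt = \frac{1}{(n+1)\binom{n}{k}}$. The only difference is that you write out the second identity in full, whereas the paper dismisses it as ``similar.''
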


\begin{proof}
By Eq.~(\ref{eq:asd76fds}), we have
$$
(R^n h)((t-1){\,}x+1) ~=~ \sum_{k=0}^n {n\choose k}\, \overline{S}_k\, (t-1)^{k}x^k
$$
and hence
$$
R^n_t\, \big((R^{n}h)((t-1)\, x+1)\big) ~=~ \sum_{k=0}^n {n\choose k}\, \overline{S}_k\, t^{n-k}\,(1-t)^{k}x^k.
$$
Integrating this expression from $t=0$ to $t=1$ and using the well-known identity
\begin{equation}\label{eq:Betakn}
\int_0^1t^{n-k}\,(1-t)^{k}\, dt ~=~ \frac{1}{(n+1){n\choose k}}{\,},
\end{equation}
we finally obtain Eq.~(\ref{eq:sfd76ddd}). Formula (\ref{prop:sdf67g}) can be proved similarly by using Eq.~(\ref{eq:sfd6543}).
\end{proof}

From Eq.~(\ref{prop:sdf67g}) we immediately derive the following algorithm for the computation of the generating function of the signature. The algorithm corresponding to Eq.~(\ref{eq:sfd76ddd}) can be derived similarly.

\begin{algorithm}\label{algo:gfsss}
The following algorithm inputs $n$ and $h(x)$ and outputs the generating function of vector $\mathbf{s}$.

\smallskip

\begin{quote}
\begin{enumerate}
\item[\textbf{Step 1.}] Let $f(t,x)=x\, (R^{n-1}h')((t-1)\, x+1)$.

\item[\textbf{Step 2.}] We have $\sum_{k=1}^n s_k\, x^k = \int_0^1 (R_1^{n-1}f)(t,x)\, dt$, where $R_1^{n-1}$ is the $(n-1)$-reflection with respect to the first argument.
\end{enumerate}
\end{quote}

\smallskip
\end{algorithm}

The computation of $h(x)$ from $\bfs$ or $\bfS$ can be useful if we want to compute the system reliability $h(p)$ directly from the signature and the component reliability $p$.

We already know that Eq.~(\ref{eq:aersd76fds}) gives the polynomial $h(x)$ in terms of vector $\bfS$. The following proposition yields simple expressions of $h(x)$ and $h'(x)$ in terms of vector $\bfs$. This result was already presented in \cite[Sect.~4]{Mar06} and \cite[Rem.~2]{MarMatb} in alternative forms.

\begin{proposition}
We have
\begin{eqnarray}
h'(x) &=& \sum_{k=1}^n s_k{\,}k{\,}{n\choose k}{\,}x^{n-k}(1-x)^{k-1}{\,},\label{eq:45tz6hjk}\\
h(x) &=& \sum_{k=1}^n s_k{\,}I_x(n-k+1,k) ~=~ \sum_{k=1}^n s_k{\,}\sum_{i=n-k+1}^n{n\choose i}{\,}x^i(1-x)^{n-i}{\,},\label{eq:45tz6}
\end{eqnarray}
where $I_x(a,b)$ is the regularized beta function defined, for any $a,b,x>0$, by
$$
I_x(a,b) ~=~ \frac{\int_0^xt^{a-1}(1-t)^{b-1}\, dt}{\int_0^1t^{a-1}(1-t)^{b-1}\, dt}{\,}.
$$
\end{proposition}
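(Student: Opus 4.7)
The plan is to derive Eq.~(\ref{eq:45tz6hjk}) directly from Eq.~(\ref{eq:sfd6543}) by applying the $(n-1)$-reflection one more time, then to integrate from $0$ to $x$ to obtain the beta-function form of $h(x)$, and finally to invoke the classical identity rewriting the regularized incomplete beta function as a binomial tail.

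First, I would record that the map $f(x)\mapsto x^{n-1}f(1/x)$, viewed on the space of polynomials of formal degree at most $n-1$, is an involution, so that $R^{n-1}(R^{n-1}f)=f$ for any polynomial $f$ with $\deg f\leqslant n-1$. Applied to $h'$, Eq.~(\ref{eq:sfd6543}) reads $(R^{n-1}h')(y)=\sum_{k=1}^n{n\choose k}{\,}s_k{\,}k{\,}(y-1)^{k-1}$, and setting $y=1/x$ gives
$$
h'(x)~=~x^{n-1}(R^{n-1}h')(1/x)~=~x^{n-1}\sum_{k=1}^n{n\choose k}{\,}s_k{\,}k\Big(\frac{1-x}{x}\Big)^{k-1}~=~\sum_{k=1}^n k{\,}{n\choose k}{\,}s_k{\,}x^{n-k}(1-x)^{k-1},
$$
which is Eq.~(\ref{eq:45tz6hjk}).

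Next, since $h(0)=\phi(\varnothing)=0$, integrating the previous identity from $0$ to $x$ yields
$$
h(x)~=~\sum_{k=1}^n k{\,}{n\choose k}{\,}s_k\int_0^x t^{n-k}(1-t)^{k-1}\dd t.
$$
Using the beta evaluation $\int_0^1 t^{n-k}(1-t)^{k-1}\dd t=B(n-k+1,k)=1/(k{n\choose k})$, the integral equals $I_x(n-k+1,k)/(k{n\choose k})$, which gives the first equality in Eq.~(\ref{eq:45tz6}). For the second equality, I would invoke the classical identity
$$
I_x(n-k+1,k)~=~\sum_{i=n-k+1}^n{n\choose i}{\,}x^i(1-x)^{n-i},
$$
which expresses the regularized incomplete beta function as the upper tail of a binomial distribution; if a self-contained argument is desired, this can be verified by checking that both sides vanish at $x=0$ and have equal derivatives, using a single integration by parts on the left.

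The main point requiring care is the first step: one must ensure that applying $R^{n-1}$ twice really recovers $h'$ even when the leading coefficient $d_n$ vanishes. This is handled by defining $R^{n-1}$ on polynomials of \emph{formal} degree $n-1$ (with possibly zero leading coefficient), which is exactly the convention already implicit in Eq.~(\ref{eq:sfd6543}); once this is granted, the rest of the argument is just integration and a standard special-function identity.
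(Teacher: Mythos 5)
Your proof is correct, and for Eq.~(\ref{eq:45tz6hjk}) and the first equality in Eq.~(\ref{eq:45tz6}) it follows the same route as the paper: the paper likewise obtains (\ref{eq:45tz6hjk}) directly from Eq.~(\ref{eq:sfd6543}) (your explicit unwinding via the involution $R^{n-1}(R^{n-1}f)=f$ and the substitution $y=1/x$ is exactly the computation behind the paper's ``immediately follows''), and then integrates using the beta evaluation (\ref{eq:Betakn}) together with $h(0)=0$. The one place where you genuinely diverge is the second equality in Eq.~(\ref{eq:45tz6}): the paper derives it from the tail-signature representation, i.e.\ by substituting $\overline{S}_{n-k}=\sum_{i=n-k+1}^n s_i$ from Eq.~(\ref{eq:SFromS}) into Eq.~(\ref{eq:aersd76fds}) and swapping the order of summation, whereas you invoke the classical identity expressing $I_x(n-k+1,k)$ as a binomial upper tail. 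Both are valid; the paper's route stays entirely inside the combinatorial machinery already set up (no external special-function fact is needed), while yours imports a standard identity but, as you note, can be made self-contained by the derivative-matching argument -- the telescoping there works because $i\binom{n}{i}=(n-i+1)\binom{n}{i-1}$, and the surviving boundary term $(n-k+1)\binom{n}{n-k+1}x^{n-k}(1-x)^{k-1}=k\binom{n}{k}x^{n-k}(1-x)^{k-1}$ matches the derivative of the left-hand side. Your remark about defining $R^{n-1}$ on polynomials of \emph{formal} degree $n-1$ so that the double reflection is harmless even when $d_n=0$ is a legitimate point of care that the paper leaves implicit.
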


\begin{proof}
Formula (\ref{eq:45tz6hjk}) immediately follows from Eq.~(\ref{eq:sfd6543}). Then, from Eqs.~(\ref{eq:Betakn}) and (\ref{eq:45tz6hjk}) we immediately derive the first equality in Eq.~(\ref{eq:45tz6}) since $h(x) = \int_0^x h'(t)\, dt$. The second equality follows from Eqs.~(\ref{eq:SFromS}) and (\ref{eq:aersd76fds}).
\end{proof}

The following proposition provides alternative expressions of $h(x)$ and $h'(x)$ in terms of $\bfS$ and $\bfs$, respectively.

\begin{proposition}\label{prop:DiffTablexX}
We have
\begin{eqnarray}
h(x) &=& \big((x\Delta_i + I)^n{\,}\overline{S}_{n-i}\big)\big|_{i=0}{\,},\label{eq:ds67ds}\\
h'(x) &=& n\big((x\Delta_i + I)^{n-1}{\,}s_{n-i}\big)\big|_{i=0}{\,},\label{eq:ds67ds7}
\end{eqnarray}
where $I$ denote the identity operator.
\end{proposition}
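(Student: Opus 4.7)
The plan is to reduce both identities to the binomial expansion of the operator $x\Delta_i+I$ and then invoke the difference-table formulas (\ref{eq:fuji2p}) and (\ref{eq:fuji4}) that have already been established. The key observation is that $\Delta_i$ acts only on the index $i$ while $x$ is a scalar with respect to that index, so the operators $x\Delta_i$ and $I$ commute. Consequently, the ordinary binomial theorem applies, giving
\begin{equation*}
(x\Delta_i+I)^m ~=~ \sum_{k=0}^m \binom{m}{k}{\,}x^k{\,}\Delta_i^k
\end{equation*}
for every $m\geqslant 0$.

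For Eq.~(\ref{eq:ds67ds}), I would apply this expansion with $m=n$ to the sequence $z_i=\overline{S}_{n-i}$ and evaluate at $i=0$, yielding
\begin{equation*}
\big((x\Delta_i+I)^n\overline{S}_{n-i}\big)\big|_{i=0} ~=~ \sum_{k=0}^n \binom{n}{k}{\,}x^k{\,}\big(\Delta_i^k\overline{S}_{n-i}\big)\big|_{i=0}.
\end{equation*}
By Eq.~(\ref{eq:fuji2p}), the inner expression equals $d_k$ for every $k=0,\ldots,n$ (the $k=0$ case follows since $d_0=\overline{S}_n=0$), and summing $d_k{\,}x^k$ recovers $h(x)$ by Eq.~(\ref{eq:swrdf675sh}).

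For Eq.~(\ref{eq:ds67ds7}), I would repeat the argument with $m=n-1$ applied to $s_{n-i}$. The binomial expansion and Eq.~(\ref{eq:fuji4}) give
\begin{equation*}
n\big((x\Delta_i+I)^{n-1}s_{n-i}\big)\big|_{i=0} ~=~ n\sum_{k=0}^{n-1}\binom{n-1}{k}{\,}x^k\,\frac{d_{k+1}}{\binom{n}{k+1}}.
\end{equation*}
The only computation is the elementary identity $\binom{n-1}{k}/\binom{n}{k+1}=(k+1)/n$, which reduces the right-hand side to $\sum_{k=0}^{n-1}(k+1){\,}d_{k+1}{\,}x^k=h'(x)$, again by Eq.~(\ref{eq:swrdf675sh}).

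There is no real obstacle here: the commutativity of $x\Delta_i$ and $I$ makes the binomial theorem available, and the two identities then follow by matching the coefficients produced by the already-established difference-table formulas. The cleanest exposition is therefore to state the binomial expansion once, treat the two cases in parallel, and verify the single binomial-coefficient simplification needed in the second case.
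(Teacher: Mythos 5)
Your proof is correct and follows essentially the same route as the paper: expand the operator $(x\Delta_i+I)^m$ binomially and match the resulting coefficients against the difference-table formula~(\ref{eq:fuji2p}). The only (harmless) divergence is in the second identity, where the paper deduces Eq.~(\ref{eq:ds67ds7}) directly from Eq.~(\ref{eq:ds67ds}) (differentiating and using $\Delta_i\overline{S}_{n-i}=s_{n-i}$), whereas you verify it independently via Eq.~(\ref{eq:fuji4}) and the ratio $\binom{n-1}{k}/\binom{n}{k+1}=(k+1)/n$, which you compute correctly.
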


\begin{proof}
By Eq.~(\ref{eq:fuji2p}) we have
$$
h(x) ~=~ \sum_{k=0}^nd_k{\,}x^k ~=~ \sum_{k=0}^n{n\choose k}{\,}x^k{\,}\big(\Delta_i^k{\,}\overline{S}_{n-i}\big)\big|_{i=0}{\,},
$$
which proves Eq.~(\ref{eq:ds67ds}) as we can immediately see by formally expanding the binomial operator expression $(x\Delta_i + I)^n$. Equation~(\ref{eq:ds67ds7}) then immediately follows from Eq.~(\ref{eq:ds67ds}).
\end{proof}

Proposition~\ref{prop:DiffTablexX} shows that the functions $h(x)$ and $h'(x)$ can be computed from difference tables. Setting
$$
D_{j,k}(x)=((x\Delta_i + I)^k{\,}\overline{S}_{n-i})|_{i=j}\quad\mbox{and}\quad d_{j,k}(x)=n((x\Delta_i + I)^{k-1}{\,}s_{n-i})|_{i=j-1},
$$
we can derive the following algorithms for the computation of $h(x)$ and $h'(x)$.

\begin{algorithm}\label{algo:s1x}
The following algorithm inputs vector $\bfS$ and outputs function $h(x)$. It uses the functions $D_{j,k}(x)$ for $k=0,\ldots,n$ and $j=0,\ldots,n-k$.

\smallskip

\begin{quote}
\begin{enumerate}
\item[\textbf{Step 1.}] For $j=0,\ldots,n$, set $D_{j,0}(x):=\overline{S}_{n-j}$.

\item[\textbf{Step 2.}] For $k=1,\ldots,n$

\mbox{}\hspace{3ex}For $j=0,\ldots,n-k$

\mbox{}\hspace{6ex}$D_{j,k}(x) ~:=~ x{\,}D_{j+1,k-1}(x)+(1-x){\,}D_{j,k-1}(x)$

\item[\textbf{Step 3.}] $h(x):=D_{0,n}(x)$.
\end{enumerate}
\end{quote}

\smallskip
\end{algorithm}

\begin{algorithm}\label{algo:s1xg}
The following algorithm inputs vector $\bfs$ and outputs function $h'(x)$. It uses the functions $d_{j,k}(x)$ for $k=1,\ldots,n$ and $j=1,\ldots,n-k+1$.

\smallskip

\begin{quote}
\begin{enumerate}
\item[\textbf{Step 1.}] For $j=1,\ldots,n$, set $d_{j,1}(x):=n{\,}s_{n-j+1}$.

\item[\textbf{Step 2.}] For $k=2,\ldots,n$

\mbox{}\hspace{3ex}For $j=1,\ldots,n-k+1$

\mbox{}\hspace{6ex}$d_{j,k}(x) ~:=~ x{\,}d_{j+1,k-1}(x)+(1-x){\,}d_{j,k-1}(x)$

\item[\textbf{Step 3.}] $h'(x):=d_{1,n}(x)$.
\end{enumerate}
\end{quote}

\smallskip
\end{algorithm}

Table~\ref{tab:345s} summarizes the main conversion formulas obtained thus far. They are given by the corresponding equation numbers. For instance, formulas to compute $\bfs$ from $\bfd$ or $h(x)$ are given in Eqs.~(\ref{eq:fuji3}), (\ref{eq:s87sf6sd}), (\ref{eq:sfrrd6543}), and (\ref{prop:sdf67g}).
\begin{table}[htbp]
\begin{tabular}{|c|ccc|}\cline{2-4}
\multicolumn{1}{c|}{} & $\bfd$ or $h(x)$ & $\bfs$ & $\bfS^{\mathstrut}$ \\
\hline
$\bfd$ or $h(x)$ & & \hspace{1ex}(\ref{eq:fuji5})(\ref{eq:fuji4})(\ref{eq:45tz6})\hspace{1ex} & (\ref{eq:fuji2})(\ref{eq:fuji2p})(\ref{eq:aersd76fds})(\ref{eq:ds67ds})\\
$\bfs$ & (\ref{eq:fuji3})(\ref{eq:s87sf6sd})(\ref{eq:sfrrd6543})(\ref{prop:sdf67g}) & & (\ref{eq:sFromS})(\ref{eq:sfd76dd}) \\
$\bfS$ & (\ref{eq:fuji1})(\ref{eq:lj3kl1})(\ref{eq:sfd76ddd}) & (\ref{eq:SFromS})(\ref{eq:sfd76dd}) & \\
\hline
\end{tabular}
\bigskip
\caption{Conversion formulas}
\label{tab:345s}
\end{table}

\subsection{Conversions based on the dual structure}
\label{sec:dual5}

We end this section by giving conversion formulas involving the dual structure of the system. Let $\phi^D\colon\{0,1\}^n\to\{0,1\}$ be the dual structure function defined as $\phi^D(\bfx) = 1-\phi(\mathbf{1}-\bfx)$, where $\mathbf{1}-\bfx=(1-x_1,\ldots,1-x_n)$, and let $h^D\colon [0,1]^n\to\R$ be its corresponding reliability function, that is, $h^D(\bfx) = 1-h(\mathbf{1}-\bfx)$.

Straightforward computations yield the following conversion formulas, where the upper index $D$ always refers to the dual structure and $\delta$ stands for the Kronecker delta:
\begin{eqnarray}
d_k^D &=& \delta_{k,0}-(-1)^k\sum_{j=k}^n{j\choose k}{\,}d_j{\,},\qquad k=0,\ldots,n,\label{eq:dd1}\\
d_k &=& \delta_{k,0}-(-1)^k\sum_{j=k}^n{j\choose k}{\,}d_j^D{\,},\qquad k=0,\ldots,n,\label{eq:dd2}\\
\overline{S}_k &=& 1-\overline{S}_{n-k}^D ~=~ 1-\sum_{j=0}^k\frac{{k\choose j}}{{n\choose j}}{\,}d_j^D{\,},\qquad k=0,\ldots,n,\label{eq:dd3}\\
s_k &=& s_{n-k+1}^D ~=~ \sum_{j=1}^k\frac{{k-1\choose j-1}}{{n\choose j}}{\,}d_j^D{\,},\qquad k=1,\ldots,n,\label{eq:dd4}\\
d_k^D &=& \delta_{k,0}-{n\choose k}\big(\Delta_i^kS_i\big)\big|_{i=0}{\,},\qquad k=0,\ldots,n,\label{eq:dd5}\\
d_k^D &=& {n\choose k}\,\big(\Delta_i^{k-1}{\,}s_{i}\big)\big|_{i=1}{\,},\qquad k=1,\ldots,n.\label{eq:dd6}
\end{eqnarray}

Recall that $\phi_k$ gives the number of path sets of size $k$. Combining (\ref{eq:TailSignature2}) with (\ref{eq:asd76fds}), we obtain the identity $\sum_{k=0}^n\phi_{n-k}{\,}x^k = (R^nh)(x+1)$, from which we immediately derive the following generating function
$$
\sum_{k=0}^n\phi_k{\,}x^k ~=~ R^n((R^nh)(x+1)) ~=~ (x+1)^n{\,}h\Big(\frac{x}{x+1}\Big){\,}.
$$
Note that this function can also be obtained by using Eqs.~(\ref{eq:TailSignature2}), (\ref{eq:dd3}), and the dual version of Eq.~(\ref{eq:asd76fds}). Indeed, we have
\begin{eqnarray*}
\sum_{k=0}^n\phi_k\, x^k &=& \sum_{k=0}^n {n\choose k}\overline{S}_{n-k}{\,}x^k ~=~ \sum_{k=0}^n {n\choose k}{\,}x^k-\sum_{k=0}^n {n\choose k}\overline{S}^D_{k}{\,}x^k\\
&=& (x+1)^n-(R^nh^D)(x+1).
\end{eqnarray*}

%---------------------------------------------------------------------------------------------- Section 4
\section{The general dependent case}

In this final section we drop the i.i.d.\ assumption and consider the general dependent setting, assuming only that there are no ties among the component lifetimes (i.e., $\Pr(T_i=T_j)=0$ whenever $i\neq j$). As a consequence, the function $h(\bfx)$ may no longer express the reliability of the system in terms of the component reliabilities.

Two concepts of signature emerge in this general setting. First, we can consider the {\em structure signature}, that is, the $n$-vector $\bfs =(s_1,\ldots,s_n)$ whose $k$-th coordinate is given by Boland's formula (\ref{eq:asad678}). Of course, the conversion formulas and algorithms obtained in Sections 2 and 3 can still be used ``as is'', even if the i.i.d.\ assumption is dropped. Second, we can consider the {\em probability signature}, that is, the $n$-vector $\mathbf{p}=(p_1,\ldots,p_n)$ whose $k$-th coordinate is given by $p_k=\Pr(T_S=T_{k:n})$.

We now elaborate on this latter case and show that a modification of the structure function enables us to formally extend almost all the conversion formulas and algorithms obtained in Sections 2 and 3 to the general dependent setting.

It was recently shown \cite{MarMat11} that
\begin{equation}\label{eq:asad678BolQ}
p_k ~=~ \sum_{\textstyle{A\subseteq C\atop |A|=n-k+1}}q(A)\,\phi(A)-\sum_{\textstyle{A\subseteq C\atop |A|=n-k}}q(A)\,\phi(A)\, ,
\end{equation}
where the function $q\colon 2^{[n]}\to\R$, called the {\em relative quality function} associated with the system, is defined by
$$
q(A) ~=~ \Pr\left(\max_{i\notin A}T_i<\min_{i\in A}T_i\right),
$$
and has the property $\sum_{|A|=k}q(A)=1$ for $k=0,\ldots,n$. Thus, for any subset $A\subseteq C$, the number $q(A)$ is the probability that the best $|A|$ components of the system are precisely those in $A$.

In the special case when the component lifetimes are i.i.d., or even exchangeable, the number $q(A)$ is exactly $1/{n\choose |A|}$ and therefore by comparing Eqs.~(\ref{eq:asad678}) and (\ref{eq:asad678BolQ}) we immediately see that the vector $\mathbf{p}$ then reduces to $\bfs$. As mentioned in \cite{MarMat11}, this observation motivates the introduction of the {\em normalized relative quality function} $\tilde{q}\colon 2^{[n]}\to\R$, defined by $\tilde{q}(A) = {n\choose |A|}{\,}q(A)$. We then have $\tilde{q}(A)=1$ whenever the component lifetimes are i.i.d.\ or exchangeable.

Following a suggestion by P. Mathonet, we now assign to the system a pseudo-structure function $\psi\colon 2^{[n]}\to\R$ defined so as to have
\begin{equation}\label{eq:fd994jncj}
\sum_{\textstyle{A\subseteq C\atop |A|=k}}\frac{1}{{n\choose |A|}}\,\psi(A) ~=~\sum_{\textstyle{A\subseteq C\atop |A|=k}}q(A)\,\phi(A),\qquad k=0,\ldots,n.
\end{equation}

\begin{definition}\label{de:76dfds5df}
Let $(C,\phi)$ be an $n$-component system with relative quality function $q$. The {\em $q$-structure function} associated with the system is the set function $\psi\colon 2^{[n]}\to\R$ defined by
$$
\psi(A) ~=~ \tilde{q}(A){\,}\phi(A) ~=~
\begin{cases}
{n\choose |A|}{\,}q(A), & \mbox{if $A$ is a path set},\\
0, & \mbox{otherwise}.
\end{cases}
$$
\end{definition}

It is clear that $\psi$ reduces to $\phi$ whenever the component lifetimes of the system are i.i.d.\ or exchangeable. In the general dependent case, the function $\psi$ is a pseudo-Boolean function, that is, a function from $\{0,1\}^n$ to $\R$. As such, it has the following multilinear form
$$
\psi(\bfx) ~=~ \sum_{A\subseteq C} \psi(A){\,}\prod_{i\in A}x_i{\,}\prod_{i\in C\setminus A}(1-x_i){\,},\qquad\bfx\in\{0,1\}^n.
$$
Just as $h(\bfx)$ is the multilinear extension of $\phi(\bfx)$, we can also define the multilinear extension $g\colon [0,1]^n \to\R$ of $\psi(\bfx)$; that is,
$$
g(\bfx) ~=~ \sum_{A\subseteq C} \psi(A){\,}\prod_{i\in A}x_i{\,}\prod_{i\in C\setminus A}(1-x_i){\,},\qquad\bfx\in [0,1]^n.
$$
This function can always be put in the unique standard multilinear form
\begin{equation}\label{eq:345hg27w}
g(\bfx) ~=~ \sum_{A\subseteq C}c(A)\,\prod_{i\in A}x_i{\,},
\end{equation}
where, by the M\"obius inversion theorem, the coefficient $c(A)$ is given by
\begin{equation}\label{eq:mob345hg27w}
c(A) ~=~ \sum_{B\subseteq A} (-1)^{|A|-|B|}{\,}\psi(B).
%~=~ \sum_{\textstyle{B\subseteq A\atop \text{$B$ path set}}} (-1)^{|A|-|B|}{\,}{n\choose |B|}{\,}q(B).
\end{equation}

Thus, in this general setting we readily see that Eq.~(\ref{eq:fd994jncj}) holds and, consequently, that Eq.~(\ref{eq:asad678}) immediately extends to Eq.~(\ref{eq:asad678BolQ}).

Now, if we also define the values $\overline{P}_k$, $\psi_k$, and $c_k$ for $k=0,\ldots,n$ as
$$
\overline{P}_k ~=~ \sum_{i=k+1}^n p_i{\,},\qquad \psi_k ~=~ \sum_{\textstyle{A\subseteq C\atop |A|=k}}\psi(A){\,},\qquad c_k ~=~ \sum_{\textstyle{A\subseteq C\atop |A|=k}}c(A){\,},
$$
then we can formally extend all our formulas and algorithms from Eq.~(\ref{eq:SFromS}) to Eq.~(\ref{eq:ds67ds7}) \emph{mutatis mutandis} to the general dependent setting.

More formally, we have the following straightforward theorem.

\begin{theorem}\label{thm:main5DC}
Equations (\ref{eq:SFromS})--(\ref{eq:ds67ds7}) still hold if we replace $s_k$, $\overline{S}_k$, $h(\bfx)$, $h(x)$, $\phi(A)$, $d(A)$, $\phi_k$, and $d_k$ with $p_k$, $\overline{P}_k$, $g(\bfx)$, $g(x)$, $\psi(A)$, $c(A)$, $\psi_k$, and $c_k$, respectively.
\end{theorem}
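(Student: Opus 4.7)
The plan is to prove the theorem by establishing a verbatim ``formal dictionary'' that translates the i.i.d.\ derivations of Sections~2 and~3 into the dependent setting. The pivotal observation is that Definition~\ref{de:76dfds5df} was tailored so that~(\ref{eq:fd994jncj}) holds; combining this with the dependent-case Boland formula~(\ref{eq:asad678BolQ}) shows that
$$
p_k ~=~ \sum_{\textstyle{A\subseteq C\atop |A|=n-k+1}}\frac{1}{{n\choose |A|}}\,\psi(A)-\sum_{\textstyle{A\subseteq C\atop |A|=n-k}}\frac{1}{{n\choose |A|}}\,\psi(A)\,,
$$
which is literally Equation~(\ref{eq:asad678}) with $(s_k,\phi)$ replaced by $(p_k,\psi)$. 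This is the single nontrivial input.

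Next I would record the remaining primitive relations on which everything else is built. By construction, $g$ is to $\psi$ exactly what $h$ is to $\phi$ via~(\ref{eq:345hg}), so its standard multilinear form~(\ref{eq:345hg27w}) with coefficients $c(A)$ is the analogue of~(\ref{eq:345hg2}); Equation~(\ref{eq:mob345hg27w}) together with its direct inverse $\psi(A)=\sum_{B\subseteq A}c(B)$ supplies the M\"obius correspondence replacing~(\ref{eq:Mobius}); and the diagonal quantities $\psi_k$, $c_k$, and $\overline{P}_k$ are defined to be the exact analogues of $\phi_k$, $d_k$, and $\overline{S}_k$. I would also note that $g(0)=\psi(\varnothing)=0$, so that the identity $g(x)=\int_0^x g'(t)\,dt$ needed for the analogue of~(\ref{eq:45tz6}) continues to hold.

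With these three primitive translations in hand---Boland's formula, the multilinear extension, and the M\"obius inversion---every subsequent identity in Sections~2 and~3 is derived by purely algebraic manipulations (swapping summations, invoking $\sum_{A\supseteq B,\,|A|=k}1={n-|B|\choose k-|B|}$, polynomial reflection and differentiation, difference tables, telescoping, integration against the beta kernel) that never use the fact that $\phi$ is Boolean-valued or that $d(A)$ is integer-valued. Each of Equations~(\ref{eq:SFromS})--(\ref{eq:ds67ds7}) therefore transcribes verbatim under the substitution, and the proof reduces to clean bookkeeping along the chain of derivations already carried out in the i.i.d.\ case.

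The main obstacle, I expect, is expository rather than mathematical: one must convince the reader that no i.i.d.\ hypothesis was ever invoked in any of the original derivations. A compact way to do this is to walk through one or two representative examples in the new notation---for instance rederiving the analogue of~(\ref{eq:TailSignature2}) by telescoping the rewritten~(\ref{eq:asad678BolQ}), and rederiving the analogue of~(\ref{eq:Mobiusc}) by permuting sums---and then simply remarking that every remaining identity follows by applying the exact same manipulations to $\psi$, $c$, $g$, $\psi_k$, $c_k$, $\overline{P}_k$ in place of $\phi$, $d$, $h$, $\phi_k$, $d_k$, $\overline{S}_k$.
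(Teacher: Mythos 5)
Your proposal is correct and matches the paper's (essentially unstated) argument: the paper sets up the $q$-structure function precisely so that (\ref{eq:fd994jncj}) turns (\ref{eq:asad678BolQ}) into the literal analogue of Boland's formula (\ref{eq:asad678}), and then declares the theorem ``straightforward'' because every derivation from (\ref{eq:SFromS}) to (\ref{eq:ds67ds7}) is purely algebraic and never uses that $\phi$ is Boolean-valued or $d(A)$ integer-valued. Your identification of the single nontrivial input, together with the checks that $\psi(\varnothing)=0$ and that the M\"obius/multilinear machinery carries over verbatim, is exactly the bookkeeping the paper relies on.
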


Let us illustrate how this theorem can be applied. Considering for instance Eq.~(\ref{eq:TailSignature2}), Theorem~\ref{thm:main5DC} shows that this equation can be translated in the general setting into
$$
\overline{P}_k ~=~ \frac{1}{{n\choose k}}\,\sum_{\textstyle{A\subseteq C\atop |A|=n-k}}\,\psi(A) ~=~ \frac{1}{{n\choose k}}\,\psi_{n-k}{\,},\qquad k=0,\ldots,n.
$$
Similarly, from Eq.~(\ref{eq:sfrrd6543}) we immediately derive the identity
\begin{equation}\label{eq:exsg3fghf}
\sum_{k=1}^n{n\choose k}{\,}p_k{\,}x^k ~=~ \int_0^x(R^{n-1}g')(t+1)\, dt{\,}.
\end{equation}

\begin{example}\label{ex:final}
Consider a $3$-component system whose structure function is given by
$$
\phi(x_1,x_2,x_3) ~=~ x_1(x_2\amalg x_3) ~=~ x_1x_2x_3 + x_1x_2(1-x_3) + x_1(1-x_2){\,}x_3{\,}.
$$
The $q$-structure function is then given by
$$
\psi(x_1,x_2,x_3) ~=~ x_1x_2x_3+3{\,}q(\{1,2\}){\,}x_1x_2(1-x_3)+3{\,}q(\{1,3\}){\,}x_1(1-x_2){\,}x_3{\,}.
$$
Using Eq.~(\ref{eq:exsg3fghf}), we finally obtain
\begin{eqnarray*}
p_1 &=& 1-q(\{1,2\})-q(\{1,3\}) ~=~ q(\{2,3\}){\,},\\
p_2 &=& q(\{1,2\})+q(\{1,3\}){\,},\\
p_3 &=& 0{\,}.
\end{eqnarray*}
\end{example}

It is noteworthy that in practice the function $g(\bfx)$ is much heavier to handle than the function $h(\bfx)$ (consider for instance Example~\ref{ex:21j4hg}). Moreover, the function $g(\bfx)$ need not be nondecreasing in each argument and hence it cannot be easily expressed as a coproduct over the minimal path sets.

However, despite these observations, Theorem~\ref{thm:main5DC} shows that this formal extension of the conversion formulas is mathematically elegant and might have theoretical applications.

%---------------------------------------------------------------------------------------------- Section 5
\section{Concluding remarks}

We have provided various conversion formulas between the signature and the reliability function for systems with continuous and i.i.d.\ component lifetimes and we have extended theses formulas to the general dependent case. This study can be regarded as the continuation of paper \cite{MarMatb}, where Eqs.~(\ref{eq:asd76fds})--(\ref{eq:lj3kl1}), Algorithm~\ref{algo:s55}, and Proposition~\ref{prop:sad6f} were already presented and established.

We conclude this paper with the following two observations which are worth particular mention:
\begin{itemize}
\item It is a well-known fact that, under the i.i.d.\ assumption, both the structure signature $\bfs =(s_1,\ldots,s_n)$ and the reliability function $h(\bfx)$ are purely combinatorial objects associated with the structure function of the system. As a consequence, the developments and results presented in Sections 2 and 3 are based only on combinatorial and algebraic arguments and do not really require any stochastic setting, even if such a setting has to be considered to define the component lifetimes.

\item The $q$-structure function of a system as introduced in Definition~\ref{de:76dfds5df} is simply a convenient transformation of the structure function of the system which enables us to extend Equations (\ref{eq:SFromS})--(\ref{eq:ds67ds7}) to the general dependent case. Even though the $q$-structure function $\psi(\bfx)$ and its corresponding multilinear extension $g(\bfx)$ are heavier to handle than their i.i.d.\ counterparts $\phi(\bfx)$ and $h(\bfx)$, Theorem~\ref{thm:main5DC} suggests that this extension is interesting more from a conceptual than applied viewpoint.
\end{itemize}

%---------------------------------------------------------------------------------------------- Acknowledgments
\section*{Acknowledgments}

The author would like to thank the reviewer for timely and helpful suggestions for improving the organization of this paper. This research is supported by the internal research project F1R-MTH-PUL-12RDO2 of the University of Luxembourg.

\end{document}